\newcommand{\Id}		{\mathcal I}
\newcommand {\Mon}	{\mathfrak M}     
\newcommand {\Proj}	{\mathfrak P}     
\newcommand {\SProj}	{\mathfrak Q}     
\newcommand{\geId}	{\ge_{_\Id}}
\newcommand{\gez}	{\ge_{_{\{0\}}}}
\newcommand{\na}	{\mathfrak n}
\newcommand{\VM}{\mathscr V(\Mon)}
\begin{document} 
\title[Positively ordered Monoids]
{The Projection Problem in Commutative, Positively Ordered Monoids}
\mybic
\date \today 
\subjclass[2010]{
Primary: 06F05.
Secondary: 20M14.} 

\keywords{
$\kappa$-domain,
$\kappa$-ideal,
Ordered monoid,
Prime $o$-ideal,
Projection,
Semilattice.
}

\begin{abstract} 
We examine the problem of projecting subsets of a 
commutative, positively ordered monoid into an 
$o$-ideal. We prove that to this end one may restrict 
to a sufficient subset, for whose cardinality we provide 
an explicit upper bound. Several applications to set 
functions, vector lattices and other more explicit 
structures are provided. 
\end{abstract}

\maketitle

\section{Introduction.}

Several problems in analysis are greatly simplified
by the possibility of reducing the cardinality of the 
set under scrutiny from arbitrary to finite -- or at least 
countable -- which follows from compactness or 
separability. In this paper we explore the possibility 
of a similar simplification arising from a notion of a 
purely set theoretic nature, $\kappa$-ideals, a concept 
originally introduced by Tarski \cite{tarski_45} that we 
adapt to the study of commutative, positively ordered 
monoids (or semigroups). In the context of this mathematical 
structure we define, section \ref{sec ideal}, ideals and 
projections and investigate, in section \ref{sec projection}, 
the projection problem, that is the problem of projecting 
a subset of a positively ordered monoid into a given ideal. 
We show in Theorem \ref{th main} that a set can be projected 
on an ideal if and only if the same is true for any of its subsets 
with cardinality less than some explicit bound -- often just 
countable subsets. The proof is elementary and exploits 
some properties of cardinal numbers. Given the simple 
mathematical structure of positively ordered monoids, 
our result is quite general and abstract, although it has 
almost immediate applications to lattices, Boolean algebras 
and to families of set functions. The main applications of
these results are developed in section \ref{sec FV} where
we introduce the class $\VM$ of functions of finite variation
defined on a p.o. monoid $\Mon$. As an application of 
Theorem \ref{th main}, we obtain in Theorem \ref{th VM} a 
necessary and sufficient condition for a subset 
$V\subseteq\VM$ to admit a strictly positive element.
This problem is connected to Maharam problem in the
theory of additive functions on Boolean algebras.

To simplify definitions, throughout the paper we assume 
the commutative property without explicit mentioning, so 
that a monoid or semigroup is always meant to be commutative.

For the rest of the paper, and without further mention,
$\Mon$ will be a positively ordered (p.o.) monoid, as 
defined by Clifford \cite[p. 308]{clifford_58}. That is, 
$\Mon$ is a monoid (written multiplicatively and with 
$1$ designating its unit) endowed with a partial order 
$\ge$ that satisfies
\begin{subequations}
\label{order}
\begin{equation}
\label{order 1}
1\ge m
\qquad
m\in\Mon
\qtext{and}
\end{equation}
\begin{equation}
\label{order fg}
m\ge m'
\qtext{implies}
mh\ge m'h
\qquad
m,m',h\in\Mon.
\end{equation}
\end{subequations}
Every monoid is a p.o. monoid if we write $m\ge n$ 
whenever $m$ divides $n$%
\footnote{
A p.o. semigroup $\mathfrak S$ is defined likewise, 
but replacing \eqref{order 1} with the condition:
$m\ge mn$ for all $m,n\in\mathfrak S$. Given that 
each p.o. semigroup may be embedded into a p.o. 
monoid, we shall mainly focus on the latter structure.}
. 

Our results become significantly simpler if we assume, 
with no loss of generality, the existence of a least 
element $0\in\Mon$. Two elements $m,n\in\Mon$ 
are disjoint if $mn=0$ and a set $M\subseteq\Mon$ is 
mutually disjoint if $0\notin M$ and $mn=0$ for all 
distinct pairs $m,n\in M$. When $M\subseteq\Mon$ 
we also use the lattice notation
\begin{equation}
\label{T perp}
M^\perp
	=
\{n\in\Mon:nm=0\text{ for all }m\in M\}.
\end{equation}
At times we shall require in addition that $\Mon$ 
is {\it normal}, i.e. that $0$ is the only nilpotent 
element of $\Mon$, or even that $\Mon$ is 
idempotent (more precisely, that all 
elements of $\Mon$ are idempotent%
\footnote{
Idempotent semigroups are often referred to as 
{\it semilattices}. See Birkhoff \cite[p. 9]{birkhoff}, 
Leader \cite{leader} or Blyth \cite[p. 19]{blyth}
}). 

Many well known mathematical structures, such as
lattices and Boolean algebras, are examples of a 
p.o. monoid or semigroup. If 
$\Mon_\alpha$ is a p.o. monoid for each  
$\alpha\in\mathfrak A$, the product p.o. monoid 
is the set
$\bigtimes_{\alpha\in\mathfrak A}\Mon_\alpha$ 
with order and composition defined coordinatewise. 
The space $\Fun{Z,\Mon}$ of all functions defined 
on some arbitrary set $Z$ and with values in a p.o. 
monoid $\Mon$ is then a product p.o. monoid. 
If $X$ is an {\it arbitrary} set and each $x\in X$ 
is identified, via the evaluation map, with a
function $\hat x\in\Fun{Z,[0,1]}$ where 
$Z=\Fun{X,[0,1]}$, we obtain the {\it abstract
p.o. monoid} $\hat X=\{\hat x:x\in X\}$
associated with $X$.

We denote by $\card A$ the cardinality of a set 
$A$ and refer to $A$ as an $\na$-set if $\na$ is 
a cardinal number and $\na>\card A$. If 
$F\subseteq\Fun{X,Y}$, the image of $A\subseteq X$ 
under $f\in F$ is written as $f[A]$ and we let 
$F[A]=\bigcup_{f\in F}f[A]$.

\section{Preliminary notions: ideals and projections.}
\label{sec  ideal}

Given the interaction between algebraic and order 
properties, several concepts, including ideals and 
projections, may be given distinct definitions 
depending if considered in algebraic or in order
terms. This section is of limited mathematical
content, provides some rigorous definitions and 
proves some basic facts.

A monoid ideal (or simply an ideal, for short) in $\Mon$ 
is a subset $\Id\subseteq\Mon$ such that%
\footnote{
Our definition corresponds to that of a semigroup ideal, 
see e.g. Anderson and Johnson \cite{anderson_johnson} 
or Rees \cite{rees}. We adopt the convention that the 
empty set is an ideal.
}
\begin{equation}
\label{ideal}
m\in\Id
\qtext{and}
n\in\Mon
\qtext{imply}
mn\in\Id;
\end{equation}
an order ideal ($o$-ideal) is a subset 
$\mathcal J\subseteq\Mon$
satisfying the more restrictive condition
\begin{equation}
\label{$o$-ideal}
m\in\mathcal J,\ 
n\in\Mon
\qtext{and}
m\ge n
\qtext{imply}
n\in\mathcal J.
\end{equation}

Given any monoid $\Mon$, even without a partial order, 
an ideal $\Id\subseteq\Mon$ induces a reflexive 
and transitive binary relation $\geId$ defined by 
\begin{equation}
\label{quotient}
m\geId n
\qtext{whenever}
mh\in\Id
\qtext{implies}
nh\in\Id
\qquad
h\in\Mon.
\end{equation}
The associated equivalence relation $\sim_{_\Id}$ 
(i.e. $m\sim_{_\Id}n$ if $m\geId n$ and $n\geId m$),
generates a quotient monoid $\Mon/\sim_{_\Id}$,
which we write more simply as $\Mon/\Id$. Defining 
multiplication of equivalence classes in the usual 
way and letting $m/\Id\ge n/\Id$ if and only if 
$m\geId n$, we obtain a p.o. monoid%
\footnote{
The factor p.o. monoid $\Mon/\Id$ should not be
confused with other quotients, such as the factor 
semigroup $\Mon-\Id$ defined by Rees \cite[p. 389]{rees}. 
The latter need not posses an order structure and is 
induced by the equivalence relation $m=n$ or $m,n\in\Id$. 
Rees congruence implies $\sim_{_\Id}$ but the 
converse need not be true.
}. 
The canonical map $\Mon\to\Mon/\Id$ is a 
homomorphism of monoids but, if $\Mon$ is a
p.o. monoid, it preserves order if and only if $\Id$ 
is an $o$-ideal. If $ M\subseteq\Mon$ we use the 
standard notation 
\begin{equation}
 M/\Id
	=
\{m/\Id:m\in M\}.
\end{equation}
If $m/\Id$ and $n/\Id$ are disjoint we say that $m$ 
and $n$ are $\Id$-disjoint and this is equivalent to 
$mn\in\Id$. A set $ M\subseteq\Mon$ is mutually 
$\Id$-disjoint if $M/\Id$ is mutually disjoint. 

$o$-ideals have special importance. Each set
$A\subseteq\Mon$ generates a corresponding 
$o$-ideal defined as
\begin{equation}
\label{I(A)}
I(A)
	=
\bigcup_{a\in A}\{m\in\Mon:m\le a\}%
\footnote{
We prefer $I(m)$ to $I(\{m\})$, when $m\in\Mon$,
and $I(p)$ to $I(p[\Mon])$, if $p\in\Fun{\Mon,\Mon}$.
}
\end{equation}
The map $A\to I(A)$ is clearly a closure operation which 
induces the order topology $\tau_o$. A map between 
two p.o. monoids, each endowed with its own order 
topology, is continuous if and only if it preserves order.
The composition operation is thus a continuous mapping 
of $\Mon\times\Mon$ to $\Mon$ and $(\Mon,\tau_o)$
a topological p.o. monoid.

Further classes of ideals are of interest. $\Id$ is a radical 
ideal if $m\in\Id$ whenever $m^j\in\Id$ for some $j\in\N$.
We write 
$\sqrt{\Id}
	=
\{m\in\Mon:m^j\in\Id\text{ for some }j\in\N\}$.
$\Id$ is 
radical if and only if $\Mon/\Id$ is normal. 
We also define a $D$-ideal to be an $o$-ideal 
$\Id$ such that $\Id\cap I(m)$ 
admits a greatest element for each $m\in\Mon$.

\begin{lemma}
\label{lemma $o$-ideal}
$\Id\subseteq\Mon$ is a $D$-ideal if and only if 
$\ \Id=p[\Mon]$ for some order preserving 
$p\in\Fun{\Mon,\Mon}$ satisfying
\begin{subequations}
\label{proj}
\begin{equation}
\label{proj contractive}
m\ge p(m)
\qquad
m\in\Mon
\qtext{and}
\end{equation}
\begin{equation}
\label{proj idempotent}
p(m)
	\ge 
n
\qtext{implies}
n
	=
p(n)
\qquad
m,n\in\Mon.
\end{equation}
\end{subequations}
\end{lemma}

\begin{proof}
If $p$ satisfies \eqref{proj idempotent} its range 
is necessarily an $o$-ideal; if, in addition, $p$ 
preserves order and satisfies \eqref{proj contractive}, 
then $p(n)\le m$ implies $p(n)\le p(m)\le m$ so 
that its range is a $D$-ideal. Conversely, if 
$\Id$ is a $D$-ideal and if $p(m)$ is the greatest 
element in the set $\Id\cap I(m)$, then the map $p$ 
clearly preserves order and satisfies \eqref{proj contractive}. 
Moreover, if $n\le p(m)$ then $n\in\Id$ so that 
$\Id\cap I(n)
	=
I(n)$,
i.e. $n=p(n)$ so that $I(p)=p[\Mon]$. Eventually,
$
\Id
	=
\bigcup_{m\in\Mon}\Id\cap I(m)
	=
\bigcup_{m\in\Mon}I\big(p(m)\big)
	=
I(p)
$.
\end{proof}

By analogy with the theory of vector lattices, a map
with the properties of Lemma \ref{lemma $o$-ideal}
is called an $o$-projection. The family $\Proj$ of 
$o$-projections forms a p.o., idempotent monoid 
if endowed with composition. 
The corresponding algebraic notion is that of a 
(monoid) projection i.e. an order preserving map 
$q\in\Fun{\Mon,\Mon}$
which satisfies \eqref{proj contractive} and
\begin{equation}
\label{proj M} \tag{\ref{proj}c}
mq(n)
	\le
q(mn)
\qquad
m,n\in\Mon.
\end{equation}
Denoting by $\SProj$ the family of projections we have 
$\Proj\subseteq\SProj$%
\footnote{
In fact, if $p\in\Proj$ then \eqref{proj idempotent}
and $mp(n)\le p(n)$ imply $mp(n)=p(mp(n))$ while 
\eqref{proj contractive} implies $p(mp(n))\le p(mn)$.
}
 while the converse holds if and 
only if $\Mon$ is idempotent. More interestingly, each 
$m\in\Mon$ corresponds with a projection $m^*$ on 
$\Proj$ via the identity
\begin{equation}
\label{iso}
\big(m^*(p)\big)(n)
	=
p(nm)
\qquad
p\in\Proj,\ 
n\in\Mon.
\end{equation}

The main example of a projection is the translate 
$T_m$ by $m$, defined as $T_m(n)=mn$.

\section{$\kappa$-ideals}
\label{sec domains}

The following is a generalisation of the classical 
notion of a prime ideal (due to Tarski 
\cite[Definition 4.1]{tarski_45}) and often used
in set theory. e adapt the definition to p.o. 
monoids.

\begin{definition}
\label{def kappa-ideal}
Let $\kappa\ge2$ be a cardinal. A $\kappa$-ideal
in $\Mon$ is an $o$-ideal $\Id$ with the property 
that every mutually $\Id$-disjoint subset of $\Mon$ 
is a $\kappa$-set. A $2$-ideal is referred to as a
prime ideal%
\footnote{
In the terminology of \cite{jech_book}, a $\kappa$-ideal 
is a $\kappa$-saturated $o$-ideal. The restriction to 
$o$-ideals is only for terminological convenience. 
}.
\end{definition}

Notice that if $\Id$ is a $\kappa$-ideal and $\Id'$
is an $o$-ideal contained in $\Id$, then $\Id'$ need
not be a $\kappa$-ideal.

Some properties of prime ideals carry over unchanged 
from the theory of rings. The family of prime ideals 
contains $\emp$ and is closed with respect to arbitrary 
unions. Their complements form thus a base for a 
topology, $\tau_p$. If $q\in\SProj$ and $\Id$ is a 
prime ideal, then $q^{-1}(\Id)$ is clearly an $o$-ideal. 
Moreover, if $m,n\in\Mon$ are such that $q(mn)\in\Id$, 
then $q(m)q(n)\in\Id$, by \eqref{proj M} and 
\eqref{proj contractive}. We must then have either 
$q(n)\in\Id$ or $q(m)\in\Id$. Thus $q^{-1}(\Id)$ is
a prime ideal when $\Id$ is so.

\begin{lemma}
\label{lemma prime}
(i)
$(\Mon,\tau_p)$ is a topological p.o. monoid
in which each $q\in\SProj$ is continuous,
(ii)
if $\{0\}$ is prime, the annihilator $M^\perp$ 
of any $M\subseteq\Mon$ is $\tau_p$-closed and
(iii)
each radical $o$-ideal is the intersection of all
prime ideals containing it (and is thus closed).
\end{lemma}

\begin{proof}
Radical, $o$-ideals are closed with respect to 
arbitrary unions. If $\Id$ is such an ideal and
if $a\notin\Id$, by Zorn lemma we can form a 
maximal radical $o$-ideal $\Id_a$ which includes 
$\Id$ but not $a$. Suppose that $m,n\in\Mon$
are such that $mn\in\Id_a$. If $n,m\in\Id_a^c$
then the radical $o$-ideals $\Id_a\cup\sqrt{I(m)}$ 
and $\Id_a\cup\sqrt{I(n)}$ both contain $a$, 
by maximality. There exist then $j,k\in\N$ such 
that $a^j\le m$ and $a^k\le n$ and thus 
$a^{j+k}\le mn\in\Id_a$. But this implies 
$a\in\Id_a$, a contradiction. Thus $\Id_a$ is 
prime and $\Id=\bigcap_{a\notin\Id}\Id_a$.
\end{proof}

The well-ordering principle permits the following 
definition:

\begin{definition}
\label{def K(I)}
Given a subset $ M\subseteq\Mon$ and an ideal 
$\Id$ in $\Mon$ we define $\kappa( M,\Id)$ to
be the least cardinal number $>\card{ M_0}$ for 
any mutually $\Id$-disjoint subset 
$M_0\subseteq M$. We write 
$\kappa(M,\{0\})=\kappa(M)$.
\end{definition}

By definition, every $o$-ideal is a $\kappa(\Mon,\Id)$%
-ideal. In applications, we shall mainly encounter the 
case
$\kappa( M,\Id)
	\le
\aleph_1$.
In general, computing $\kappa( M,\Id)$ may not 
be easy. We provide some explicit examples.

\begin{example}
\label{ex separable}
Let $\Mon$ be the monoid of real valued, non negative, 
lower semicontinuous functions on some topological 
space $X$ with binary operation $fg=f\wedge g$.
If $X$ is separable, then $\kappa(\Mon)\le\aleph_1$;
if $X$ is compact and totally disconnected 
then $\kappa(\Mon)\le\aleph_0$.
\end{example}

\begin{example}
\label{ex monoid}
Consider a commutative monoid $\Mon$ with its 
natural order. Then, ideals and $o$-ideals 
coincide. Let $m_1,\ldots,m_N$ be distinct, irreducible 
elements of $\Mon$ and let $m_0=\prod_{i=1}^Nm_i$. 
The ideal $I(m_0)$ is clearly seen to be a radical $o$-%
ideal. Consider $M\subseteq\Mon$ to be mutually 
$I(m_0)$-disjoint. Then, for each $h\in M$ there must 
be an integer $1\le i\le N$ such that $m_i$ 
does not divide $h$. At the same time, since 
$hf\in I(m_0)$ when $h,f\in M$, for each $i$ there
is at most one element in $M$ which is not
divided by $m_i$. It follows that $\card{ M}\le N$.
\end{example}

\begin{example}
\label{ex lattice}
Let $L$ be an $AL$-space (\cite[p. 193]{aliprantis_burkinshaw}). 
Fix $x_0\in L_+$ and consider $\Mon=\{x\in L:0\le x\le x_0\}$ 
endowed with the binary operation $\wedge$. 
If $x_1,\ldots,x_n\in\Mon$ are mutually disjoint,
then
\begin{equation}
\label{lat ineq}
\norm{x_0}
	\ge 
\norm{x_1\vee\ldots\vee x_n}
	=
\norm{x_1}+\ldots+\norm{x_n}.
\end{equation}
This implies that $\kappa(\Mon)\le\aleph_1$.
\end{example}

Erd\H{o}s and Tarski \cite{erdos_tarski} proved that the 
general conjecture that $\kappa( M,\Id)$ may be {\it any} 
cardinal number is false. In the next Lemma we adapt their 
result to the present setting.

\begin{lemma}[Erd\H{o}s and Tarski]
\label{lemma erdos tarski}
Let $\Id$ be a radical $o$-ideal in $\Mon$ and fix
$M\subseteq\Mon$. Then
$\kappa( M,\Id)
	=
\kappa( M/\Id)$ 
and $\kappa( M,\Id)$ cannot be a singular limit 
cardinal nor $\aleph_0$.
\end{lemma}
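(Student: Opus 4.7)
The plan is to treat the two assertions separately: the equality $\kappa(\Ta,\Id)=\kappa(\Ta/\Id)$ will follow from a cardinality-preserving correspondence between the two notions of disjointness, and the second statement will be obtained by invoking the classical theorem of Erd\H{o}s and Tarski \cite{erdos_tarski}.

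For the equality, I would pass to the quotient p.o.\ monoid $\Mon/\Id$, which has absorbing zero $0/\Id$ since $\Id$ is an order ideal. Given any mutually $\Id$-disjoint $\Ta_0\subset\Ta$, the set $\{f/\Id:f\in\Ta_0\}\subset\Ta/\Id$ consists of classes distinct from $0/\Id$, because $\Ta_0\cap\Id=\emp$, and whose pairwise products in $\Mon/\Id$ vanish, because $fg\in\Id$ for distinct $f,g\in\Ta_0$; it is therefore mutually disjoint in $\Ta/\Id$. The nontrivial point, and where radicality is essential, is that the map $f\mapsto f/\Id$ is injective on $\Ta_0$: were $f\ne g$ in $\Ta_0$ to satisfy $f/\Id=g/\Id$, testing the defining equivalence \eqref{quotient} with $h=g$ would give $fg\in\Id\iff g^2\in\Id$, so $\Id$-disjointness forces $g^2\in\Id$ and radicality then forces $g\in\Id$, contradicting $\Ta_0\cap\Id=\emp$. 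Conversely, a choice of representatives lifts any mutually disjoint subset of $\Ta/\Id\setminus\{0/\Id\}$ to a mutually $\Id$-disjoint subset of $\Ta$ of the same cardinality. The admissible cardinalities on the two sides thus coincide, and hence so do the corresponding values of $\kappa$.

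For the second assertion, the equality just proved reduces the problem to showing that $\kappa(\Ta/\Id)$ cannot equal $\aleph_0$ or a singular limit cardinal, i.e.\ that in the p.o.\ monoid $\Mon/\Id$ the supremum of the cardinalities of antichains in $\Ta/\Id$ is attained whenever it equals one of those values. This is precisely the content of the theorem of Erd\H{o}s and Tarski \cite{erdos_tarski}, whose proof uses only the $0$-absorption and the pairwise composition of the underlying structure and so transfers verbatim to $\Mon/\Id$. Under the contrary hypothesis $\kappa(\Ta/\Id)=\kappa$ with $\kappa$ either $\aleph_0$ or a singular limit cardinal, their construction would yield a mutually disjoint subset of $\Ta/\Id$ of size exactly $\kappa$, in direct contradiction with the definition of $\kappa(\Ta/\Id)$ as a strict upper bound. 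The main obstacle is this second part: the $\aleph_0$ case follows from a K\"onig-type tree argument on finite antichains, but the singular-limit case requires the delicate cofinal-sequence disjointification of \cite{erdos_tarski}, which I would simply invoke.
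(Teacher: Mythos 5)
Your first half—the equality $\kappa(\Ta,\Id)=\kappa(\Ta/\Id)$—is correct and is essentially the paper's own argument: lift a disjoint family from $\Ta/\Id$ by choosing representatives, and push a mutually $\Id$-disjoint $\Ta_0$ down to the quotient, using radicality to see that $f\mapsto f/\Id$ is injective on $\Ta_0$ (your variant, testing \eqref{quotient} at $h=g$ to get $g^2\in\Id$, is the same computation the paper performs inside $\Mon/\Id$).

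The second half has a genuine gap. The theorem of Erd\H{o}s and Tarski is a statement about \emph{mutually exclusive} families in a partially ordered set: $t,u$ are exclusive when they admit no common non-zero lower bound. Your notion of disjointness in $\Mon/\Id$ is algebraic: the product is zero. These are not the same relation a priori, and the Erd\H{o}s--Tarski proof genuinely uses the order (when two elements are compatible it works below a common non-zero lower bound), so it does not ``transfer verbatim'' to an arbitrary symmetric disjointness relation. Indeed, for a bare symmetric relation the $\aleph_0$ assertion is false: a complete multipartite graph with parts of sizes $1,2,3,\dots$ has arbitrarily large finite independent sets but no infinite one. The missing step---and the second place where radicality is indispensable---is the identification of the two notions in $\Mon/\Id$: exclusiveness implies disjointness because $fg/\Id$ is a common lower bound of $f/\Id$ and $g/\Id$ by \eqref{order 1} and \eqref{order fg}; conversely, if $f/\Id$ and $g/\Id$ are disjoint and $h/\Id$ is a common lower bound, then $h^2/\Id\le(f/\Id)(g/\Id)=0$, so $h^2\in\Id$ and radicality gives $h\in\Id$, i.e.\ $h/\Id=0$. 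Only after this bridge does the quantity $\kappa(\Ta/\Id)$ coincide with the Erd\H{o}s--Tarski cardinal $\mathfrak d(\Ta/\Id)$ and their Theorem~1 become applicable. You should add this identification; the rest of your outline then goes through.
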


\begin{proof}
The inequality
$\kappa( M/\Id)
	\le
\kappa( M,\Id)$
is clear. 
Let $m,n\in\Id^c$ be distinct,
$\Id$-disjoint  elements such that $m/\Id=n/\Id$. Then 
necessarily
\begin{equation*}
0/\Id
	=
(mn/\Id)
	=
(m/\Id)(n/\Id)
	=
(m/\Id)(m/\Id)
	=
(m^2/\Id)
\end{equation*}
i.e.
$m^2\in\Id$ which is impossible if $\Id$ is 
radical.

The second claim follows from \cite[Theorem 1]{erdos_tarski} 
once we prove that two elements $x,z\in\Mon/\Id$ are mutually
disjoint if and only if there is no $y\in\Mon/\Id$ such that 
$y\le x$ and $y\le z$ other than $0/\Id$. One implication 
follows from the fact that $\Mon/\Id$ is a p.o. monoid so 
that $xz\le x$ and $xz\le z$. Conversely, if $h/\Id\le m/\Id$ 
and $h/\Id\le n/\Id$ for some $m,n,h\in\Mon$ with $mn\in\Id$, 
we conclude
\begin{equation}
(h^2/\Id)
	=
(h/\Id)(h/\Id)
	\le
(n/\Id)(m/\Id)
	=
0/\Id,
\end{equation}
i.e. that $h\in\Id$ since $\Id$ is radical. This shows that 
when $\Id$ is a radical, $o$-ideal a subset of $\Mon/\Id$ 
is mutually disjoint in our definition if and only if it is so 
in the sense of \cite[p. 316]{erdos_tarski}. 
\end{proof}

The simple properties of prime ideals listed in Lemma
\ref{lemma prime} need not be true in the case of
$\kappa$-ideals.

\begin{lemma}
\label{lemma k}
If $\kappa$ is a regular cardinal number then the
intersection of a $\kappa$-family of $\kappa$-ideals
is a $\kappa$-ideal and, if $\kappa=\aleph_0$, so is
their union.
\end{lemma}

\begin{proof}
First of all,
$\Id_0=\bigcap_\alpha\Id_\alpha$ 
and 
$\Id_1=\bigcup_\alpha\Id_\alpha$
are $o$-ideals if $\Id_\alpha$ is so for each index 
$\alpha$ in the $\kappa$-set $\mathfrak A$. Choose 
$ M\subseteq\Mon$ to be mutually $\Id_0$-disjoint. 
Then, $ M\subseteq\Id_0^c$. Write
$ M_\alpha
	=
 M\cap\Id_\alpha^c$. 
Of course, $ M_\alpha$ is mutually $\Id_\alpha$-disjoint.
But then, given that 
$ M
	=
\bigcup_\alpha M_\alpha$,
\begin{equation}
\card{ M}
	\le
\sum_{\alpha\in\mathfrak A}\card{ M_\alpha}
	\le
\card{\mathfrak A}\cdot\kappa
	\le
\kappa.
\end{equation}
However, if $\card{ M}=\kappa$ then $\kappa$ is a 
singular, limit cardinal a contradiction. This proves 
that $\Id_0$ is a $\kappa$-ideal. 

Concerning union, assume that $\mathfrak A$ is a 
finite set and choose $ M\subseteq\Mon$ to be mutually
$\Id_1$-disjoint. Suppose that $\card M\ge\aleph_0$. 
By passing to a subset, we can assume with no loss of 
generality that $\card M=\aleph_0$. Then, since 
$mn\in\Id_\alpha$ for some $\alpha\in\mathfrak A$,
we conclude from a well known result of Ramsey 
\cite[Theorem A]{ramsey} (see also Erd\H{o}s and Rado
\cite[Theorem 1]{erdos_rado}) that there exists
$\alpha_0\in\mathfrak A$ and a subset $ M_0\subseteq M$
such that $\card{ M_0}=\aleph_0$ and that $ M_0$
is $\Id_{\alpha_0}$-disjoint, which contrasts with the
assumption that each $\Id_\alpha$ is a $\kappa$-ideal. 
Thus necessarily $\card M<\aleph_0$.
\end{proof}

\section{The projection problem}
\label{sec projection}

Each $D$-ideal is the range of some $o$-projection. 
In general, the question whether a projection maps 
a given subset of $\Mon$ into some ideal is not 
trivial and we refer to it as the projection problem. 
More formally,

\begin{definition}[Projection Problem]
Given $ M\subseteq\Mon$, $Q\subseteq\SProj$ and 
an ideal $\Id$ in $\Mon$ is there $q\in Q$ such 
that $q[ M]\subseteq\Id$?
\end{definition}

This problem has an interesting structure in the 
case in which $\Id$ is a radical, $o$-ideal. Given
the preceding remarks, there is no loss of generality
in setting $\Id=\{0\}$ and assuming that $\Mon$ 
is normal. The following result establishes the
existence of sufficient subsets of bounded cardinality.

\begin{theorem}
\label{th main}
Assume that $\Mon$ is normal and let
$Q\subseteq\SProj$. Each $ M\subseteq\Mon$ admits 
a $\kappa\big(Q[ M]\big)$-subset $M_0\subseteq M$ 
which is sufficient for $Q$, i.e. such that
\begin{equation}
\label{main}
q[ M]=\{0\}
\qtext{if and only if}
q[ M_0]=\{0\}
\qquad
q\in Q.
\end{equation}
Moreover, $Q[M]\cap Q[ M_0]^\perp\subseteq\{0\}$.
\end{theorem}

\begin{proof}
Form the class of all subsets of $Q[M]$ which 
are mutually disjoint and choose, by virtue of 
Zorn lemma, a maximal set 
$\{q_\alpha(m_\alpha):\alpha\in\mathfrak A\}$
in this class. Write  
$M_0
	=
\{m_\alpha:\alpha\in\mathfrak A\}$.
Then,
$\card{M_0}
	\le
\card{\mathfrak A}
	<
\kappa\big(Q[M]\big)$.
To prove the last claim first, let $q(n)\in Q[M]$
be such that $q(n)q_\alpha(m_\alpha)=0$ for all 
$\alpha\in\mathfrak A$. Given that $\Mon$ is normal, 
the collection
$\{q_\alpha(m_\alpha):\alpha\in\mathfrak A\}
	\cup
\{q(n)\}$
contains 
$\{q_\alpha(m_\alpha):\alpha\in\mathfrak A\}$
properly so that $q(n)\ne0$ would contradict
maximality. Let $q_0\in Q$ satisfy
$q_0[M_0]=\{0\}$. By \eqref{proj M},
\begin{align}
q_0(m)q_\alpha(m_\alpha)
	\le
q_0(mm_\alpha)
	\le
q_0(m_\alpha)
	=
0
\qquad
m\in M,\ \alpha\in\mathfrak A.
\end{align}
Then 
$q_0(m)
	\in
\{q_\alpha(m_\alpha):\alpha\in\mathfrak A\}^\perp$
so that necessarily $q_0[M]=\{0\}$. 
\end{proof}

A number of implications of Theorem \ref{th main} may 
be obtained right away. 

\begin{corollary}
\label{cor main}
(i)
If $\Mon$ is normal, then each set $M\subseteq\Mon$
admits a $\kappa(I(M))$-subset $M_0$ such that
\begin{equation}
\label{orth}
M_0^\perp
	=
M^\perp.
\end{equation}
In particular, $M$ and $M_0$ have the same $\gez$-upper
bounds.
(ii)
Each $P\subseteq\Proj$ admits a $\kappa(I(P))$-subset
$P_0$ such that
\begin{equation}
\label{zeros}
\bigcap_{p\in P}p^{-1}(0)
=
\bigcap_{p\in P_0}p^{-1}(0).
\end{equation}
\end{corollary}

\begin{proof}
The first claim follows from Theorem \ref{th main} 
upon letting $Q$ consist of all translates by some 
$m\in\Mon$ and noting that, in this case, 
$Q[M]\subseteq I(M)$. By definition \eqref{quotient} 
an element $n\in\Mon$ is an $\gez$-upper bound 
for $M_0$ if and only if $\{n\}^\perp\subseteq M_0^\perp$ 
i.e, by \eqref{orth}, if and only if it is a $\gez$-upper 
bound for $M$. To prove \tiref{ii} recall that each 
$m\in\Mon$ acts as an $o$-projection $m^*$ on 
the normal p.o. monoid $\Proj$. Then, 
$m\in\bigcap_{p\in P}p^{-1}(0)$ 
is equivalent to $m^*[P]=\{0\}$ and 
$\{m^*p:m\in\Mon,\ p\in P\}\subseteq I(P)$.
\end{proof}

For a set $M$ of positive elements in a vector
lattice $X$ the condition $\kappa(I(M))\le\aleph_1$ 
implies then that $M\subseteq\{m\}^{\perp\perp}$ 
for some $m\in X_+$.

%

The projection problem may also be studied locally,
by looking at the sets
\begin{equation}
\label{Qa(f;I)}
\SProj(m)
	=
\big\{q\in \SProj:q(m)\ne0\big\}
\qquad
m\in M
\end{equation}
which are open in the order topology of $\SProj$. 
Theorem \ref{th main} translates into a compactness
statement: if $\{\SProj(m):m\in M\}$ covers $Q$ 
it admits then a $\kappa(Q[M])$-subcover. But 
then, if $Q$ is a ``large'' set and if 
$Q
	\subseteq
\bigcup_{m\in M}\SProj(m)$, 
then one of such sets must be ``large'' as well. 
This version of the pigeonhole principle admits 
a rigorous formulation.

\begin{theorem}
\label{th regular}
Let $\Mon$, $ M$ and $Q$ be as in Theorem 
\ref{th main}. Let the cardinal $\na$ be the 
greatest of $\kappa(Q[ M])$ and $\aleph_0$. 
If
\begin{equation}
\label{iineq}
\card Q
	\ge
\na
	>
\card{Q\cap\SProj(m)}
\qquad
m\in M
\end{equation}
then the set $Q_0=\{q\in Q:q[ M]=\{0\}\}$
has the same cardinality as $Q$.
\end{theorem}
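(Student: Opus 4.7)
The plan is to reduce the quantifier over $\Ta$ in the defining condition $Q[\Ta]\subset\Id$ to a quantifier over a small subset $\Ta_0$, using Theorem \ref{th main}, and then to bound the ``bad'' set of $Q$'s by a cardinal arithmetic argument that leans on the regularity guaranteed by Lemma \ref{lemma erdos tarski}.

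First, I would invoke Theorem \ref{th main} to obtain a $\kappa\bigl(\Qa[\Ta],\Id\bigr)$-subset $\Ta_0\subset\Ta$ with the property that, for every $Q\in\Qa$, $Q[\Ta]\subset\Id$ holds exactly when $Q[\Ta_0]\subset\Id$. Rewriting in terms of the sets $\Qa(f;\Id)$ from \eqref{Qa(f;I)} this gives
\begin{equation*}
\{Q\in\Qa:Q[\Ta]\subset\Id\}
	=
\Qa\setminus\bigcup_{f\in\Ta_0}\Qa(f;\Id).
\end{equation*}
So the theorem reduces to showing that the union on the right has cardinality strictly less than $\card\Qa$; the conclusion will then follow from $\card\Qa\ge\na$ and standard cardinal subtraction.

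The key step is to verify that $\na$ is a \emph{regular} infinite cardinal, so that a union of fewer than $\na$ sets each of size less than $\na$ still has size less than $\na$. If $\kappa\bigl(\Qa[\Ta],\Id\bigr)\le\aleph_0$ then $\na=\aleph_0$, which is regular. Otherwise $\na=\kappa\bigl(\Qa[\Ta],\Id\bigr)$ and Lemma \ref{lemma erdos tarski} ensures that this cardinal is neither $\aleph_0$ nor a singular limit cardinal, so again it is regular. With this in hand, note that $\card{\Ta_0}<\kappa\bigl(\Qa[\Ta],\Id\bigr)\le\na$ and, by the hypothesis \eqref{iineq}, $\card{\Qa(f;\Id)}<\na$ for each $f\in\Ta_0$; regularity then yields
\begin{equation*}
\Bigl|\bigcup_{f\in\Ta_0}\Qa(f;\Id)\Bigr|
	<
\na
	\le
\card\Qa.
\end{equation*}

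Finally I would conclude: removing from $\Qa$ a subset of cardinality strictly smaller than $\card\Qa$ (which is infinite because $\card\Qa\ge\na\ge\aleph_0$) leaves a set of the same cardinality as $\Qa$. The main obstacle is really the first paragraph: identifying that $\na$ is always regular is what makes the pigeonhole-style bound work, and this is exactly where the combinatorial content of Lemma \ref{lemma erdos tarski} is used. Everything else is a direct application of Theorem \ref{th main} together with elementary cardinal arithmetic.
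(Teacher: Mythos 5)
Your proposal is correct and follows essentially the same route as the paper's own proof: both reduce to the subset $\Ta_0$ from Theorem \ref{th main}, write the target set as $\Qa\setminus\bigcup_{f\in\Ta_0}\Qa(f;\Id)$, and use the regularity of $\na$ (which the paper asserts and you justify explicitly via Lemma \ref{lemma erdos tarski}) to bound the union strictly below $\na\le\card\Qa$ before concluding by cardinal subtraction.
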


\begin{proof}
If $\na$ is as in the statement it is then necessarily
a regular cardinal number. Let $ M_0\subseteq M$ be
a $\kappa(Q[M])$-subset sufficient for $Q$. Define
$Q_1
	=
Q\cap\bigcup_{m\in M_0}\SProj(m)$.
Clearly, $Q_0=Q\setminus Q_1$. 
Moreover, by basic cardinal arithmetic
\begin{equation}
\label{comp}
\card{Q_1}
	\le
\sum_{m\in M_0}\card{Q\cap\SProj(m)}
	\le
\na\cdot\na
	=
\na.
\end{equation}
However, since $\na$ is regular by 
\cite[Lemma 3.10]{jech_book} we must have
$\card{Q_1}<\na$ and
$\card{Q_0}
	=
\card{Q}$.
\end{proof}

Theorem \ref{th regular} applies, e.g., when 
$\card{Q}
	=
\aleph_1
	\ge
\kappa(\Mon)$ 
(in fact, every successive cardinal is regular, see 
\cite[Corollary 5.3]{jech_book}). 

Eventually, Theorem \ref{th main} translates into a
result on partitions.

\begin{theorem}
\label{th partition}
Let $\Mon$, $ M$ and $Q$ be as in Theorem \ref{th main}
and let $\na$ be a regular cardinal number 
$\ge\kappa(Q[ M])$. Assume that $\Mon$ 
decomposes as
\begin{equation}
\Mon
	=
\{0\}\cup\bigcup_{\alpha\in\mathfrak A}\Mon_\alpha
\end{equation}
in which $\mathfrak A$ is a $\na$-set and 
$0\notin\Mon_\alpha$ for each $\alpha\in\mathfrak A$. 
Then, $Q$ admits the decomposition
\begin{equation}
\label{decomp}
Q
	=
Q_0\cup\bigcup_{\beta\in\B}Q_\beta
\end{equation}
in which $\B$ is a $\na$-set, $Q_0[ M]=\{0\}$ 
while for each $\beta\in\B$ there exist 
$\alpha_\beta\in\mathfrak A$ and $m_\beta\in M$ 
such that 
\begin{equation}
Q_\beta[m_\beta]\subseteq\Mon_{\alpha_\beta}.
\end{equation}
\end{theorem}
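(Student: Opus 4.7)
The plan is to extract a small witnessing subset of $\Ta$ via Theorem \ref{th main} and then classify each $Q\in\Qa$ either as one that projects $\Ta$ on $\Id$ or by a pair $(f,\alpha)\in\Ta_0\times\mathfrak A$ that records where $Q(f)$ lands. Concretely, apply Theorem \ref{th main} to $\Ta$, $\Qa$ and $\Id$: it produces $\Ta_0\subset\Ta$ of cardinality strictly less than $\kappa(\Qa[\Ta],\Id)\le\na$ satisfying, for every $Q\in\Qa$, the equivalence $Q[\Ta]\subset\Id$ if and only if $Q[\Ta_0]\subset\Id$, which is exactly \eqref{main}. Set $\Qa_0=\{Q\in\Qa:Q[\Ta]\subset\Id\}$; this trivially projects $\Ta$ on $\Id$ and supplies the first piece of \eqref{decomp}.

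For every $Q\in\Qa\setminus\Qa_0$ the equivalence in \eqref{main} guarantees some $f\in\Ta_0$ with $Q(f)\notin\Id$, and then the hypothesized decomposition of $\Mon$ places $Q(f)\in\Mon_\alpha$ for at least one $\alpha\in\mathfrak A$. Using the axiom of choice I fix, for each such $Q$, one pair $\bigl(f(Q),\alpha(Q)\bigr)\in\Ta_0\times\mathfrak A$ with $Q\bigl(f(Q)\bigr)\in\Mon_{\alpha(Q)}$, and let $\mathbf I\subset\Ta_0\times\mathfrak A$ be the image of $Q\mapsto\bigl(f(Q),\alpha(Q)\bigr)$. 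For $i=(f_i,\alpha_i)\in\mathbf I$ I then put
\[
\Qa_i=\bigl\{Q\in\Qa\setminus\Qa_0:f(Q)=f_i,\ \alpha(Q)=\alpha_i\bigr\}.
\]
By construction $\Qa_i[f_i]\subset\Mon_{\alpha_i}$, the $\Qa_i$ partition $\Qa\setminus\Qa_0$, and jointly with $\Qa_0$ they cover $\Qa$, which is \eqref{decomp}.

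The only nontrivial point is to check that $\mathbf I$ is a $\na$-set. Since $\na$ is regular and, by the standard convention (and in light of Lemma \ref{lemma erdos tarski} ruling out $\kappa(\Qa[\Ta],\Id)=\aleph_0$), infinite, and since both $\card{\Ta_0}<\na$ and $\card{\mathfrak A}<\na$, cardinal arithmetic yields
\[
\card{\mathbf I}\le\card{\Ta_0}\cdot\card{\mathfrak A}=\max\bigl(\card{\Ta_0},\card{\mathfrak A}\bigr)<\na,
\]
exactly as in the analogous step of Theorem \ref{th regular}. This cardinality estimate via regularity of $\na$ is the only real analytic content of the argument; everything else is bookkeeping once Theorem \ref{th main} and the assumed decomposition of $\Mon$ are in hand.
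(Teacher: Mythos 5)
Your proposal is correct and follows essentially the same route as the paper: extract $\Ta_0$ from Theorem \ref{th main}, set $\Qa_0=\{Q\in\Qa:Q[\Ta]\subset\Id\}$, index the remaining projections by pairs in $\Ta_0\times\mathfrak A$, and bound $\card{\mathbf I}$ by regularity of $\na$. The only (harmless) difference is that you use a choice function to make the $\Qa_i$ a genuine partition of $\Qa\setminus\Qa_0$, whereas the paper simply takes all occurring pairs and the possibly overlapping sets $\Qa_i=\{Q\in\Qa:Q(f_i)\in\Mon_{\alpha_i}\}$.
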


\begin{proof}
Define
$Q_0
=
\{q\in Q:q[ M]=\{0\}\}$.
According to Theorem \ref{th main} we can extract 
a $\na$-set $ M_0\subseteq M$ which is sufficient for 
$Q$. Then, $q\notin Q_0$ if and only if there exists 
a pair $(\alpha,m)\in\mathfrak A\times M_0$ such 
that $q(m)\in\Mon_\alpha$. Let $\B$ be the set of all
such pairs and write each $\beta\in\B$ in the form 
$(\alpha_\beta,m_\beta)$. Given that $\mathfrak A$ 
and $ M_0$ are $\na$-sets and that $\na$ is regular
$
\card{\B}
	\le
\sum_{\alpha\in\mathfrak A}\card{ M_0}
	<
\na
$.
The 
decomposition \eqref{decomp} becomes obvious 
upon defining
$Q_\beta
	=
\{q\in Q:q(m_\beta)\in\Mon_{\alpha_\beta}\}$
for all
$\beta\in\B$.
\end{proof}

\section{Functions of Bounded Variation.}
\label{sec FV}
We provide some more explicit applications. If we
define the group operation on $\R_+$ by $\wedge$
we obtain that $\Fun{X,\R_+}$ is an idempotent,
product p.o. monoid.

\begin{corollary}
Let $X$ be a non empty set and let $\F$ be an 
ideal in the p.o. product monoid $\Fun{X,\R_+}$. 
Assume that $\kappa(\F)\le\aleph_1$. Then there 
exist $f_1,f_2,\ldots\in\F$ such that
\begin{equation}
\sup_nf_n(x)=0
\qiff
\sup_{f\in\F}f(x)=0.
\end{equation}
\end{corollary}

\begin{proof}
Upon replacing $f$ with $ f/(1+ f)$ there 
is no loss of generality in assuming that 
$\F\subseteq\Fun{X,[0,1]}$. The map 
$q_A(f)=f\set A$ is an $o$-projection for 
each $\emp\ne A\subseteq X$. 
Let $Q$ be the corresponding family. We have 
$\kappa(Q[\F])
	\le
\kappa(\F)
	\le
\aleph_1$.
The claim then follows from Theorem \ref{th main}.
\end{proof}

More interesting conclusions may be obtained with
additional structure. 

\begin{definition}
An increasing function $f\in\Fun{\Mon,\R}$ is of 
finite variation, in symbols $f\in\mathscr V(\Mon)$, 
if
\begin{equation}
\label{norm}
f(0)=0
\qand
\norm f_{_{\VM}}
=
\sup_{M\subseteq\Mon}
\sum_{m\in M}f(m)
	<
+\infty
\end{equation}
where $M$ ranges over all finite, mutually
$f^{-1}(0)$-disjoint subsets of $\Mon$. 
\end{definition}

$\VM$ is a p.o. sub semigroup in $\Fun{\Mon,\R_+}$ 
and for each $f\in\VM$ the set $f^{-1}(0)$ is an 
$\aleph_1$-ideal. Thus, each set $M\subseteq\Mon$ 
contains a countable subset $m_1,m_2,\ldots\in M$ 
such that for each
$n\in\Mon$
\begin{equation}
\sup_{m\in M}f(nm)=0
\qiff
\sup_{i\in\N}f(nm_i)=0.
\end{equation}

\begin{example}
\label{ex capacity}
Let $\Mon$ be an idempotent, p.o. monoid and let the 
function $f\in\Fun{\Mon,\R}$ satisfy $f(0)=0$ and
\begin{equation}
\label{capacity}
f(m)
	\ge
\sum_{\emp\ne M_0\subseteq M}(-1)^{1+\card{M_0}}
f\Big(\prod_{n\in M_0} n\Big)
\qquad
m\in\Mon,\ 
M\subseteq I(m)\text{ finite}.
\end{equation}
If $\Mon$ is a $\pi$ system of sets then \eqref{capacity} 
corresponds to the definition of a supermodular capacity 
given by Choquet \cite[p. 171]{choquet}. If $M\subseteq\Mon$ 
is mutually $f^{-1}(0)$-disjoint, then
$f(1)
	\ge
\sum_{m\in M}f(m)$ so that $f\in\VM$.
\end{example}

\begin{theorem}
\label{th VM}
Let $V\subseteq\VM$ be such that $f\set{I(m)}\in V$
whenever $f\in V$ and $m\in\Mon$. The following 
properties are mutually equivalent:
\begin{enumerate}[(a)]
\item
\label{IU}
for every $\emp\ne U\subseteq V$ the set 
$\bigcap_{f\in U} f^{-1}(0)$
is an $\aleph_1$-ideal in $\Mon$;
\item\label{k(V)}
$\kappa(V)\le\aleph_1$;
\item
\label{wac}
every $\emp\ne U\subseteq V$ admits some
$f_{_U}\in\VM$ such that
$f_{_U}^{-1}(0)
	=
\bigcap_{f\in U}f^{-1}(0)$.
\end{enumerate}
\end{theorem}

\begin{proof}
\imply{IU}{k(V)}
If $\emp\ne U\subseteq V$ is mutually disjoint then
any two elements $f,g\in U$ have disjoint support. 
For each $g\in U$ we can thus select $m_g\in\Mon$ 
such that $g(m_g)>0$. The family $\{m_g:g\in U\}$ 
is mutually $\bigcap_{f\in U} f^{-1}(0)$-disjoint 
and has the same cardinality as $U$. If 
$\bigcap_{f\in U} f^{-1}(0)$ is an 
$\aleph_1$-ideal, then necessarily $U$ is countable.
This proves that $\kappa(V)\le\aleph_1$. 
\imply{k(V)}{wac}
For each $m\in\Mon$ consider the projection 
$q_m\in\SProj(\VM)$ implicitly defined by
$q_mf=f\set{I(m)}$ and apply Theorem \ref{th main} 
with $Q=\{q_m:m\in\Mon\}$ and $M=U$. Observe 
that $q_mf=0$ if and only if $f(m)=0$. By assumption, 
$Q[U]\subseteq V$ so that 
$\kappa(Q[U])
	\le
\aleph_1$.
We obtain a countable subset $U_0\subseteq U$ such 
that for any $m\in \Mon$, $q_m[U_0]=0$ if and only if 
$q_m[U]=0$. If $f_1,f_2,\ldots$ is an enumeration of 
$U_0$ we can define $f_{_U}\in\Fun{\Mon,\R}$ by letting
\begin{equation}
\label{fU}
f_{_U}(m)
	=
\sum_j2^{-j}f_j(m)/\big(1+\norm{f_j}_{_{\VM}}\big)
\qquad
m\in\Mon.
\end{equation}
Notice that $f_{_U}\in\VM$ and that $f_{_U}(m)=0$ 
is equivalent to $\sup_{f\in U}f(m)=0$.
\imply{wac}{IU}
If $\emp\ne U\subseteq V$ and if 
$\bigcap_{f\in U}f^{-1}(0)
	=
f_{_U}^{-1}(0)$
for some $f_{_U}\in\VM$, then necessarily 
$\bigcap_{f\in U}f^{-1}(0)$ is certainly an 
$\aleph_1$-ideal by the finite variation 
property.
\end{proof}

Two final comments. First, the proof shows that if for 
given $U\subseteq V$ there exists $f_{_U}\in\VM$ 
satisfying \iref{wac}  then it can always be taken to 
be of the form \eqref{fU}. This is important since if 
$U$ consists e.g. of supermodular capacities, so will 
be $f_{_U}$. Second, Theorem \ref{th VM} may be 
useful to prove the existence of some $f\in\VM$ 
which is positive on all $m\in\Mon$, $m\ne0$. This 
is an important and challenging problem (that goes 
much beyond the scope of this work). In the case in 
which $\Mon$ is a Boolean algebra and $V$ consists 
of additive probabilities on $\Mon$, this is just the 
problem raised by Maharam in \cite{maharam} (see 
\cite{talagrand} for a partial, negative answer). The 
problem makes however sense also in other classes 
of functions of finite variation e.g. not additive set 
functions on a given algebra of sets. In applying
condition \iref{k(V)} of Theorem \ref{th VM} one 
should recall that the disjointness condition formulated
for $\VM$ need not coincide with the corresponding
condition e.g. for additive set functions. 

\BIB{acm}

\end{document}